\documentclass[12pt]{amsart}
\usepackage{amsmath,amsfonts,amscd,bezier,amsthm,amssymb}
\usepackage{mathrsfs,mathtools}
\usepackage{cite}
\usepackage{graphicx}
\usepackage{float}
\usepackage{enumitem,xfrac,nccmath}
\usepackage{tasks, multicol,yhmath}
\usepackage{hyperref}
\usepackage{amsrefs}
\usepackage{cleveref}
\usepackage{tikz}
\usetikzlibrary{graphs}
\usepackage{comment}

\usepackage[pagewise]{lineno}

\theoremstyle{plain}
\newtheorem{thm}{Theorem}
\newtheorem{prop}[thm]{Proposition}

\theoremstyle{remark}
\newtheorem*{rmk}{Remark}
\newtheorem*{rmks}{Remarks}
\theoremstyle{definition}

\newcommand{\N}{\mathbb{{N}}}

\newcommand{\C}{\mathbb{{C}}}

\newcommand{\eps}{{\varepsilon}}

\newcommand{\Col}{\mathcal{{C}}}
\renewcommand{\H}{\mathcal{{H}}}
\newcommand{\GG}{\mathcal{{G}}}
\newcommand{\TT}{\mathcal{{T}}}

\DeclareMathOperator{\hijos}{Chi}
\DeclareMathOperator{\des}{Des}

\numberwithin{equation}{section}

\allowdisplaybreaks

\makeatletter
\DeclareRobustCommand{\eqrefp}[2]{%
	\textup{\tagform@{\refp{#1}{#2}}}}
\makeatother
\DeclareRobustCommand{\refp}[2]{%
	\expandafter\ifx\csname r@#1\endcsname\relax
	\textbf{??}%
	\else
	\edef\areferencia{\ref{#1}-)}%
	\expandafter\eqrefpaux\areferencia-#2%
	\fi}
\def\eqrefpaux#1-#2){#1}

\begin{document}
	\title[Shift universality]{Rota universality of shifts on trees}

        \author[J. Buckley]{Jeremiah Buckley}
	\address[J. Buckley]{Department of Mathematics, Faculty of Natural, Mathematical and Engineering Sciences, S4.02.1, Strand Building, Strand Campus, Strand, London WC2R 2LS.}
	\email{jeremiah.buckley@kcl.ac.uk}

	\author[D. Seco]{Daniel Seco}
	\address[D. Seco]{Departamento de An\'alisis Matem\'atico e IMAULL,  Universidad de La Laguna, Avenida Astrof\'isico Francisco S\'anchez, s/n, 38206 San Crist\'obal de La Laguna, Santa Cruz de Tenerife,  Spain} \email{dsecofor@ull.edu.es}

	\thanks{The first author was partially supported by EPSRC: NIA EP/V002449/1. The second author was supported by Grant PID2024-160185NB-I00 funded by MICIU/AEI/10.13039/501100011033 and by FEDER, UE, through the Generaci\'on de Conocimiento programme of Spanish Ministry of Science, Innovation and Universities;  and by grant RYC2021-034744-I of the Ram\'on y Cajal programme from Agencia Estatal de Investigaci\'on (Spanish Ministry of Science, Innovation and Universities).}

	\subjclass[2020]{Primary: 05C30, 47B37; Secondary: 47A10, 47A15.}
	
	\keywords{Universal operators; shift operators; directed trees; spectral radius.}

	\date{\today}
	
	\begin{abstract} 
	We consider shift operators on infinite directed trees. We prove that under very mild assumptions on the structure of the tree, the resulting operator is universal (in the sense of Rota). We show connections between the spectrum of the operator and the frequency of branching in the tree.
	\end{abstract}
	
	\maketitle
	
\section{Introduction}

A recurring theme in operator theory is that hard analytic questions can be recast in terms of a single, well-chosen operator. Rota universality is a prototypical example of this idea. An operator $U$ on a separable infinite-dimensional Hilbert space $\H$ is \emph{universal} if for every non-zero bounded operator $T$ on $\H$ there are an invariant subspace $M$ of $U$, a scalar $\omega\in\C$ and an isomorphism $A:\H\to M$ with $UA=\omega AT$. In other words, every operator is similar to a scalar multiple of the restriction of $U$ to an invariant subspace. In particular, the Hilbert invariant subspace problem (HISP) is equivalent to understanding the invariant subspaces of any one universal operator.  A good reference for Rota universality is \cite{CowenGallardo} while for the HISP we recommend \cite{ChalPart}. Known examples of universal operators come from classical topics in analysis: adjoints of shifts on function spaces, certain Toeplitz and composition operators. In this paper we look at a different source, \emph{shifts on trees}.  We will show that, under some mild assumptions, the shift operator $S$ on a directed tree is universal in the sense of Rota, and so is $S-\lambda I$ for an explicit set of values of $\lambda$. This will involve studying the spectrum of $S$. 

Let $\TT$ be a (directed) tree, let $V$ be its vertices and let $\Col$ denote the map that sends a vertex to its parent. We are interested in the shift map $S$ on the Hilbert space $\ell^2(V)$ given by
\begin{equation*}
    S e_v= 
    \begin{cases}
    0\text{ if } v \text{ is a root},\\
    e_{\Col(v)}\text{ otherwise.}
    \end{cases}
\end{equation*}  
We refer to \cite{Jabetal} for the definition of a root, and for a comprehensive study of classical operator theory on directed trees. We will assume that there is a maximum bound on the number of children of each vertex (see condition (A3) below). In that case, the operator $S$ can be extended to a bounded operator on all of $\ell^2(V)$ by continuity and linearity. 

By descendants of a vertex $v$ we mean all elements $u$ of $V$ such that there exists $n \in \N$ for which $\Col^n(u)=v$. When $n=1$ we refer to \emph{children} of $v$, the set of which is denoted by $\hijos v$.

Throughout we shall assume that $\TT$ satisfies the following:
\begin{enumerate}[label={(A\arabic*)}]
    \item $V$ is countable.\label{Ass: count}
    \item $\TT$ is leafless, that is, $\hijos v\neq\varnothing$ for every $v\in V$.\label{Ass: leafless}
    \item\label{Ass: bdd branch} There exists $B>0$ such that $\#\hijos v\leq B$ for every $v\in V$, that is, no vertex has more than $B$ children.
    \item\label{Ass: lots of branch} There exists an infinite set of vertices $v\in V$ whose descendants are pairwise disjoint and satisfying $\#\hijos v\geq 2$.
\end{enumerate}

\begin{rmks}
    Let us make some comments on our assumptions.
    \begin{enumerate}[label=(\roman*)]
        \item Assumption \ref{Ass: count} is necessary for $S$ and its adjoint $S^*$ to have a reasonable definition \cite{Jabetal}*{Proposition 3.1.10}. In any case, since trees are by definition connected, $V$ is countable for any tree satisfying (A3). 
            \item Assumption \ref{Ass: leafless} is simply convenient, and our results can be applied to trees with leaves by ``pruning'', i.e., by considering the subtree where finite branches are eliminated and applying our techniques to the subtree. Similarly, by pruning the parts of the tree with little or no branching, one may improve the results.

        \item Assumption \ref{Ass: bdd branch}  limits the frequency of branching present in the tree which guarantees that $S$ is a well-defined bounded operator.
        \item Assumption \ref{Ass: lots of branch} ensures that there is enough branching in $\TT$ to make things interesting.
        \end{enumerate}     
\end{rmks}

We have two main examples in mind when we make these assumptions, directed binary trees and the Collatz graph (although strictly speaking one needs to eliminate cycles from the graph). In fact, our original motivation was to study the dynamics of the Collatz map from an operator-theoretic perspective. This connects tangentially with the treatment in \cite{Mori}. We give more details on these special cases in \Cref{sec: exas}.

To state our main results, we define
\begin{equation*}
    \kappa_m(v)=\#\{u\in V:\Col^{(m)}(u)=v\}\leq B^m
\end{equation*}
to be the number of descendants of $v\in V$ that are exactly $m$ generations below it, and put
\begin{equation*}
    \kappa_m^{(+)}=\sup_{v\in V} \kappa_m(v)\qquad\text{and}\qquad\kappa_m^{(-)}=\inf_{v\in V} \kappa_m(v).
\end{equation*}
It is easy to see that $\kappa_{m+\ell}^{(+)}\le \kappa_m^{(+)}\kappa_\ell^{(+)}$ and $\kappa_{m+\ell}^{(-)}\geq\kappa_m^{(-)}\kappa_\ell^{(-)}$ since, e.g., if we wish to compute $\kappa_{m+\ell}(v)$ then $v$ has at most $\kappa_m^{(+)}$ descendants exactly $m$ generations below it, and each of these has at most $\kappa_\ell^{(+)}$ descendants exactly $\ell$ generations below them. This means that $\log \kappa_m^{(+)}$ (respectively $\log \kappa_m^{(-)}$) is a sub-additive (respectively super-additive) sequence and so
\begin{equation*}
    t_{\pm} = \lim_{m\to\infty} \frac1m \log \kappa_m^{(\pm)}
\end{equation*}
exist, by Fekete's lemma. We put $R_{\pm}=\exp(\frac12 t_{\pm})$.

The following is our main result, which is deliberately simple to prove yet powerful:

\begin{thm}\label{thm1} 
    \begin{enumerate}[label=(\alph*)]
        \item\label{thm: part univ} For all $|\lambda|< R_-$, $\lambda$ is an eigenvalue of $S$. Moreover, $S- \lambda I$ is universal.
        \item\label{thm: part spec rad} The spectral radius of $S$ is $R_+$.
    \end{enumerate}
\end{thm}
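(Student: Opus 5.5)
For universality I would use the Caradus criterion: if $U$ is bounded on a separable infinite-dimensional Hilbert space, $\ker U$ is infinite-dimensional and $U$ is surjective, then $U$ is universal. So part \ref{thm: part univ} reduces to showing that for $|\lambda|<R_-$ the operator $S-\lambda I$ is onto and has infinite-dimensional kernel.

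\emph{Eigenvectors.} Fix $v\in V$ and set
\[
x=\sum_{m\ge0}\lambda^m\sum_{u:\,\Col^{(m)}(u)=v}\kappa_m(v)^{-1}e_u .
\]
Applying $S$ sends each descendant at level $m+1$ to its parent at level $m$, so the $e_v$ coefficient is killed and the recursion must reproduce $\lambda x$. The naive uniform weight $\kappa_m(v)^{-1}$ only works if each level-$m$ vertex has the same number of children. I would instead build the weights recursively: put weight $c_u$ on $u$ with $c_v=1$, and split $\lambda c_w$ equally among the children of $w$. Then $(Sx)_w=\sum_{u\in\hijos w}c_u=\lambda c_w$ for every descendant $w$ of $v$. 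Also $(Sx)_{\Col(v)}=c_v=1$, so $x$ is an eigenvector of $S$ only after correcting this top term.

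A cleaner route is to use vertices $v$ with at least two children. Let $v$ have distinct children $a,b$, and take $x_a-x_b$, where $x_a,x_b$ are the recursive constructions above rooted at $a$ and $b$. The top terms cancel at $v$, so $(S-\lambda)(x_a-x_b)=0$. Assumption \ref{Ass: lots of branch} gives infinitely many such $v$ with pairwise disjoint descendant sets. The resulting eigenvectors therefore have disjoint supports and are linearly independent, so $\ker(S-\lambda I)$ is infinite-dimensional, provided $x_a\in\ell^2$.

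\emph{The main obstacle} is the $\ell^2$ estimate. Equal splitting does not give weights $\lambda^m/\kappa_m$; the total squared mass at level $m$ is $|\lambda|^{2m}\sum_u\prod(\text{branching})^{-2}$. I would choose the splitting to minimise the $\ell^2$ norm: distribute $\lambda c_w$ among descendants so that the level-$m$ sum is spread equally over all $\kappa_m(a)$ descendants. Concretely, give $u$ weight proportional to its number of descendants deep down, so that the level-$m$ squared mass is about $|\lambda|^{2m}/\kappa_m(a)\le |\lambda|^{2m}/\kappa_m^{(-)}$. By the definition of $t_-$ this is summable exactly when $|\lambda|^2<e^{t_-}=R_-^2$. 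Justifying this choice needs a limiting argument, for instance taking weights proportional to $\kappa_{N-m}$-type counts and a weak limit as $N\to\infty$, or using supermultiplicativity of $\kappa^{(-)}$ block by block.

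\emph{Surjectivity.} I would deduce this from $\|(S^*-\bar\lambda)y\|\ge c\|y\|$, which shows $S-\lambda$ has closed range and trivial cokernel. Here $S^*e_w=\sum_{u\in\hijos w}e_u$. On each level of the tree, $\|S^{*m}y\|^2\ge\kappa_m^{(-)}\|y\|^2$: the images of distinct vertices are disjoint, and each vertex contributes at least $\kappa_m^{(-)}$ descendants, with Cauchy--Schwarz giving the lower bound. So the lower spectral bound $r(S^*)_{\rm low}=\lim\inf\|S^{*m}y\|^{1/m}/\|y\|^{1/m}\ge R_-$, and $S^*-\bar\lambda$ is bounded below for $|\lambda|<R_-$ by the standard Neumann-type argument for operators whose minimum modulus grows like $R_-^m$. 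This gives part \ref{thm: part univ}.

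For part \ref{thm: part spec rad} I would compute $\|S^m\|=\|S^{*m}\|$ directly. For $x$ supported anywhere, $S^m$ sums the coordinates over $\kappa_m(w)$-sets, and these sets are disjoint for distinct $w$. By Cauchy--Schwarz $\|S^mx\|^2\le\kappa_m^{(+)}\|x\|^2$, with equality attained on the constant vector over a maximizing set. Hence $\|S^m\|=(\kappa_m^{(+)})^{1/2}$, and Gelfand's formula gives $r(S)=\exp(t_+/2)=R_+$.
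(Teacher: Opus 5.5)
\paragraph{The gap: the kernel.} What is missing is a proof that $\ker(S-\lambda I)$ is infinite-dimensional, and hence that $\lambda$ is an eigenvalue, for $1\le|\lambda|<R_-$. For $|\lambda|<1$ your equal-splitting vector already works. All coefficients at a given depth share the argument of $\lambda^{\text{depth}}$, so $\sum_{u\in\hijos(w)}|c_u|^2\le|\sum_{u\in\hijos(w)}c_u|^2=|\lambda|^2|c_w|^2$, and the mass at depth $n$ is at most $|\lambda|^{2n}$.

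You rightly flag the $\ell^2$ estimate as the main obstacle, but you never prove it, and neither route you sketch works as stated.
\begin{itemize}
\item Iterating the eigen-equation gives $\sum_{u\in\hijos^{\langle n\rangle}(a)}c_u=\lambda^n$. So the mass at depth $n$ is always at least $|\lambda|^{2n}/\kappa_n(a)$, with equality only for uniform weights on that level. Uniform weights on two consecutive levels force every vertex on the upper level to have the same number of children. Your target of mass ``about $|\lambda|^{2n}/\kappa_n(a)$ at every depth'' therefore comes with no mechanism that could achieve it.
\item The weak-limit route has no uniform $\ell^2$ bound on the horizon-$N$ vectors with weights $\lambda^j\kappa_{N-j}(u)/\kappa_N(a)$, and in general there is none, because these weights can concentrate on a single ray. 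Take the tree rooted at $a=s_0$ consisting of a ray $s_0,s_1,\dots$, where each $s_j$ has one further child rooting a subtree whose vertices all have $2$ or $3$ children, with growth rate $\rho_{j+1}$ strictly increasing in $j$. Then $\kappa_m^{(-)}\ge 2^m$, yet $\kappa_{N-j}(s_j)/\kappa_N(a)\to1$. So the pointwise limit is $\sum_j\lambda^je_{s_j}\notin\ell^2$ for every $|\lambda|\ge1$.
\end{itemize}

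\paragraph{How to close it.} Your phrase ``supermultiplicativity block by block'' points at the paper's construction but does not carry it out. Fix one $m$ with $|\lambda|^{2m}<\kappa_m^{(-)}$ and restart the distribution every $m$ generations: set $c_u=c_w\lambda^j\kappa_{m-j}(u)/\kappa_m(w)$ for $u\in\hijos^{\langle j\rangle}(w)$, $1\le j\le m$, whenever $w$ lies at depth $km$ below $a$. The identity $\sum_{u\in\hijos(w)}\kappa_i(u)=\kappa_{i+1}(w)$ gives the eigen-equation. The mass at depth $(k+1)m$ is at most $|\lambda|^{2m}/\kappa_m^{(-)}<1$ times the mass at depth $km$. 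The common-argument inequality above bounds the mass at depth $km+j$ by $|\lambda|^{2j}$ times that at depth $km$.

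Alternatively, your own surjectivity result closes the gap without constructing eigenvectors. If $T$ is onto with bounded right inverse $R$ and $\|E\|\,\|R\|<1$, then $k\mapsto k-R(I+ER)^{-1}Ek$ injects $\ker T$ into $\ker(T+E)$, and $z\mapsto(I-RT)z$ injects $\ker(T+E)$ into $\ker T$. Hence $\dim\ker(S-\lambda I)$ is locally constant, so constant, on the disc $|\lambda|<R_-$. It is infinite at $\lambda=0$, since $e_{u_+}-e_{u_-}\in\ker S$ for infinitely many branching vertices with disjoint descendants.

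\paragraph{The rest.} Part (b) is the paper's argument in different clothing; the paper notes $S^mS^{*m}e_v=\kappa_m(v)e_v$. Your surjectivity proof, via $\|S^{*m}y\|^2=\sum_v\kappa_m(v)|y_v|^2\ge\kappa_m^{(-)}\|y\|^2$ and the closed range theorem, is a valid and shorter alternative to the paper's explicit preimage series $y=\sum_jy_j$. Replace the appeal to a ``Neumann-type argument'' with the factorization $S^{*m}-\bar\lambda^mI=Q(S^*-\bar\lambda I)$, $Q$ bounded. This shows $S^*-\bar\lambda I$ is bounded below as soon as $\kappa_m^{(-)}>|\lambda|^{2m}$.
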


\begin{rmks}
    \phantom{w}
    \begin{enumerate}[label=(\roman*)]
        \item Our proof that $S$ is universal involves showing that it satisfies Caradus' condition \cite{Car}. A refinement of this condition due to Cowen and Gallardo-Guti\'{e}rrez \cite{CG1}*{Theorem 2} shows that in this case there exists $\eps>0$ such that $S- \lambda I$ is universal for all $|\lambda|< \eps$. Our theorem gives a quantifiable version. Similar results for composition operators are given in \cite{Mat}, though we are not aware of any such results for shift operators.
        \item Since for the directed binary tree $R_-=R_+$, both bounds are sharp. We will give a full spectral description of this example in Section~\ref{sec: exas}.
        \item The existence of the limit defining $R_+$ follows directly from the proof of Part~\ref{thm: part spec rad} of the theorem via Gelfand's formula, without needing to appeal to Fekete's Lemma.
        \item By \cite{Jabetal}*{Corollary 3.3.2} the spectrum of $S$ is circularly symmetric, in particular it contains the circle of radius $R_+$. A natural question is to understand the intersection of the spectrum with the annulus $R_-<|\lambda|<R_+$. 
        
    \end{enumerate}     
\end{rmks}

We identify two natural directions that our work points to:
Can we use operator theory to study properties of trees? For instance, in Section~\ref{sec: Collatz exa} we show how our results reformulate an old conjecture on the growth of the Collatz tree. And can the properties of the tree shed any light on the invariant subspace structure of the operator? In particular, is the Hilbert invariant subspace problem tractable from a graph theoretic point of view?

Our plan is as follows: In \Cref{sec: exas} we will make some comments on the two mentioned examples that we have in mind. Then, in \Cref{sec: 3} we will discuss simple operator theoretic properties of $S$, and compute a formula for $S^*$. We will then use this to prove part (b) of our main Theorem. We conclude in \Cref{sec: univ} by establishing part (a).

\section{Examples}\label{sec: exas}
As we alluded to in the introduction, we are mainly motivated by two examples, that we consider enlightening.

\subsection{Directed binary tree}
We say that a directed tree $\mathcal B$ satisfying Assumptions \labelcref{Ass: count,Ass: bdd branch,Ass: leafless,Ass: lots of branch} is a \emph{directed binary tree} if additionally every vertex $v\in V$ has exactly two children, i.e., $\#\hijos(v)=2$.

 There are essentially only two binary trees (with or without a root). In the case with a root, the graph can be interpreted as modelling the two left shifts $S_1$ and $S_2$ over the non-commutative bidisk, but we take the opposite perspective: our shifts are backwards in such a representation and act by eliminating from each monomial the left-most variable $z_j$.

Clearly in this case we have $R_+=R_-=\sqrt{2}$. Let us first show that our result is sharp, that is, if $|\lambda|=\sqrt{2}$ then $\lambda$ is not an eigenvalue. Indeed, if $\lambda\neq 0$ is an eigenvalue and
\begin{equation*}
    x=\sum_{v\in V}a_v e_v\neq 0
\end{equation*}
a corresponding eigenvector then
\begin{equation*}
    \sum_{u\in\hijos(v)}a_u=\lambda a_v
\end{equation*}
for every $v\in V$. Choose $v_0$ such that $a_{v_0}\neq 0$ and, applying the elementary inequality $|z|^2+|w|^2\geq \frac12|z+w|^2$, notice that 
\begin{equation*}
    \sum_{u\in\hijos(v_0)}|a_u|^2\ge \frac{|\lambda|^2}{2} |a_{v_0}|^2.
\end{equation*}
From now on we denote by $\hijos^{\langle n \rangle}(v)$ the set of all descendants of $v$ at distance $n$. Iterating we have
\begin{equation*}
    \sum_{u\in\hijos^{\langle n \rangle}(v_0)}|a_u|^2\ge \left(\frac{|\lambda|^2}{2}\right)^n |a_{v_0}|^2.
\end{equation*}
To ensure that $a_v$ is in $\ell^2$ we see that we must have $|\lambda|<\sqrt{2}$.

In fact, one can give a complete spectral description in this case. It is not difficult to check, using similar ideas, that the point spectrum of $S^*$ is empty. Further, by \cite{Jabetal}*{Corollary 3.3.2} the spectra of $S$ and $S^*$ are circularly symmetric, which means that the continuous spectrum consists of all of $\{\lambda: |\lambda|=\sqrt{2}\}$.

\subsection{The (pruned) Collatz graph}\label{sec: Collatz exa}
The Collatz graph is given by choosing $V=\N$ and 
\begin{equation*}
    \Col(n)= \begin{cases}
    n/2  \qquad & \text{if } n \text{ is even,} \\
    (3n+1)/2 \qquad  & \text{if } n \text{ is odd.}\end{cases}
\end{equation*}
The famous Collatz conjecture is equivalent to stating that the Collatz graph is connected. One nice contribution in \cite{Mori}*{Section 4} is to reinterpret this statement in the language of $C^*$-algebras. It would be interesting to find an operator-theoretic formulation of the Collatz conjecture in terms of the shift $S$.

As we mentioned previously, a priori our results cannot be applied to the Collatz graph, because of the presence of cycles. However, after a careful examination, our proof actually applies to any graph $\GG$ that satisfies Assumptions \labelcref{Ass: count,Ass: bdd branch,Ass: leafless} and additionally $\GG\setminus C$ satisfies Assumption \labelcref{Ass: lots of branch}; here $C=\{v: \Col^k(v)=v \text{ for some } k\}$ is the set of vertices in some cycle in $\GG$.

For the Collatz graph we have $R_-=1$ because multiples of $3$ exhibit no branching, i.e., if $n \equiv 0 \mod 3$, then $\hijos^{\langle k \rangle}(n)=\{2^kn\}$ for all $k\in \N$. For this reason Lagarias and Weiss studied the pruned graph, obtained by removing all multiples of $3$. They proved that \cite{LW}*{Section 3}
\begin{equation*}
    2^{1/8}\le R_-\le R_+\le 3^{1/4},
\end{equation*}
and further conjectured that for large $m$
\begin{equation*}
    \kappa_m(n)= \left(\frac43\right)^{m(1+o(1))}
\end{equation*}
for every vertex $n$. As evidence they showed \cite{LW}*{Theorem 3.1} that the average value of $\kappa_m(n)$ (averaged over the vertices $n$) is $(4/3)^m$, which yields
\begin{equation*}
    2^{1/8}\le R_-\le \frac2{\sqrt3}\le R_+\le 3^{1/4}.
\end{equation*}
A computer assisted proof by Applegate and Lagarias, \cite{AL}*{Theorem 1.1}, consisting of examining all trees of depth 30, gives
\begin{equation*}
    (1.302053)^m < \kappa_m(n) < (1.358386)^m
\end{equation*}
for every $n$. They made the stronger\footnote{This conjecture is `uniform in $n$' while the previous conjecture is for fixed $n$.} conjecture \cite{AL}*{Conjecture C} that 
\begin{equation*}
    R_-= R_+ = \frac2{\sqrt3}.
\end{equation*}
Computing the true growth rate appears to be a deep and difficult question.

\section{The spectral radius}\label{sec: 3}
We begin by establishing that $S$ is indeed a bounded operator. We denote by $V^\circ$ the set of vertices that are not a root.

\begin{prop}
    $S$ is bounded and $\|S\|\le\sqrt{B}$.
\end{prop}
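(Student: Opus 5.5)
We compute $S$ on a finitely supported vector $x=\sum_{v\in V}a_v e_v$ and bound its norm; boundedness on all of $\ell^2(V)$ then follows by density and continuity. By definition,
\begin{equation*}
    Sx=\sum_{v\in V^\circ} a_v e_{\Col(v)}=\sum_{w\in V}\Big(\sum_{u\in\hijos(w)}a_u\Big)e_w,
\end{equation*}
where we regroup the sum according to the parent $w=\Col(v)$. The regrouping is legitimate because every non-root $v$ has exactly one parent, so the sets $\hijos(w)$, $w\in V$, partition $V^\circ$. Root terms simply disappear.

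The key estimate is Cauchy--Schwarz applied to each inner sum. Since $\#\hijos(w)\le B$ by \ref{Ass: bdd branch},
\begin{equation*}
    \Big|\sum_{u\in\hijos(w)}a_u\Big|^2\le \#\hijos(w)\sum_{u\in\hijos(w)}|a_u|^2\le B\sum_{u\in\hijos(w)}|a_u|^2 .
\end{equation*}
We sum over $w\in V$. Using the partition of $V^\circ$ again, we get
\begin{equation*}
    \|Sx\|^2\le B\sum_{v\in V^\circ}|a_v|^2\le B\|x\|^2 .
\end{equation*}
Hence $S$ extends uniquely to a bounded operator on $\ell^2(V)$ with $\|S\|\le\sqrt B$.

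There is no real obstacle here. The only care needed is in justifying the regrouping: each vertex appears as a child of exactly one vertex, and roots contribute nothing. The countability assumption \ref{Ass: count} guarantees that $\{e_v\}$ is a countable orthonormal basis, so finitely supported vectors are dense.
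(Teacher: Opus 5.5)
Your proof is correct and is essentially the paper's argument: regroup $Sx$ by parents, bound each $\bigl|\sum_{u\in\hijos(w)}a_u\bigr|^2$ by Cauchy--Schwarz using $\#\hijos(w)\le B$, and sum over $w$. Your added care with finitely supported vectors and density makes explicit the extension-by-continuity step that the paper only mentions in the introduction.
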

\begin{proof}
    If $x=\sum_{v\in V} a_v e_v$ then
    \begin{align*}
        \|Sx\|^2&=\bigg\|\sum_{v\in V} a_v e_{\Col(v)}\bigg\|^2=\bigg\|\sum_{v\in V^\circ} a_v e_{\Col(v)}\bigg\|^2=\bigg\|\sum_{v\in V} \left(\sum_{u\in\hijos{v}}a_u\right) e_{v}\bigg\|^2\\
        &=\sum_{v\in V}\left|\sum_{u\in\hijos{v}}a_u\right|^2\le B\sum_{v\in V}\sum_{u\in\hijos{v}}|a_u|^2\le B\|x\|^2.
    \end{align*}
\end{proof}

We will work with the adjoint, which is easy to describe in terms of $\Col$. 
\begin{prop}
    The adjoint of $S$ is given by
    \begin{equation*}
        S^*\bigg(\sum_{v\in V} a_v e_v\bigg)=\sum_{v\in V^\circ} a_{\Col(v)} e_v.
    \end{equation*}
    In particular, $S^*(e_v)=\sum_{u\in\hijos{v}}e_u$.
\end{prop}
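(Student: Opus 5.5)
The plan is to verify the adjoint identity $\langle Sx,y\rangle=\langle x,T y\rangle$, where $T$ denotes the proposed operator
\begin{equation*}
    T\bigg(\sum_{v\in V} b_v e_v\bigg)=\sum_{v\in V^\circ} b_{\Col(v)} e_v .
\end{equation*}
Both $S$ and $S^*$ are bounded, with $\|S^*\|=\|S\|\le\sqrt{B}$ by the previous Proposition. So it suffices to check the identity on basis vectors and then extend by linearity and continuity. We must also confirm that $T$ is well defined on $\ell^2(V)$. For this, note that
\begin{equation*}
    \|Ty\|^2=\sum_{v\in V^\circ}|b_{\Col(v)}|^2=\sum_{w\in V}\#\hijos(w)\,|b_w|^2\le B\|y\|^2 ,
\end{equation*}
since each $w$ appears as $\Col(v)$ exactly $\#\hijos(w)$ times. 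This regrouping of a sum over $v$ into a sum over parents $w$ is the same device used in the boundedness Proposition, and it is justified because all terms are nonnegative.

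For the basis check, take $u,w\in V$. If $u$ is a root, then $\langle Se_u,e_w\rangle=0$. Otherwise $\langle Se_u,e_w\rangle=\langle e_{\Col(u)},e_w\rangle$, which equals $1$ exactly when $\Col(u)=w$. On the other side, $Te_w=\sum_{v\in V^\circ,\ \Col(v)=w}e_v=\sum_{v\in\hijos(w)}e_v$. Hence $\langle e_u,Te_w\rangle$ equals $1$ exactly when $u\in V^\circ$ and $\Col(u)=w$, and $0$ otherwise. The two expressions agree in every case.

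It follows that the bounded operators $S^*$ and $T$ agree on the finite linear span of $\{e_w\}$, which is dense, so $S^*=T$. The special case $S^*(e_v)=\sum_{u\in\hijos v}e_u$ is then just the computation of $Te_w$ above. There is no real obstacle here. The only point needing care is the case of a root, which has no parent and so must be excluded from the sum defining $T$; this is handled by the restriction to $V^\circ$.
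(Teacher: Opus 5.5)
Your proof is correct and takes essentially the same route as the paper, which likewise reduces everything to the matrix-coefficient identity that $\langle Se_v,e_u\rangle$ equals $1$ exactly when $v\in\hijos u$ and $0$ otherwise. The only difference is that the paper expands $\langle Sx,y\rangle$ for general $x,y$ as a double series and regroups it formally, whereas you check the identity on basis vectors and then pass to all of $\ell^2(V)$ using your bound $\|Ty\|\le\sqrt{B}\|y\|$ and density; this makes rigorous the interchange of sums that the paper leaves implicit.
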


\begin{proof}
    We compute
    \begin{align*}
        \left\langle S \bigg(\sum_{v\in V} a_v e_v\bigg), \sum_{u\in V} b_u e_u\right\rangle &= \sum_{u,v\in V} a_v \overline{b_u} \langle Se_v,e_u\rangle\\ 
        &= \sum_{u\in V,v\in V^\circ} a_v \overline{b_u}\langle e_{\Col (v)},e_u\rangle\\
        &= \sum_{u\in V,v\in \hijos u} a_v \overline{b_u}\\
        &= \sum_{u,v\in V} a_v \overline{b_u} \Big\langle e_{v},\sum_{w\in\hijos{u}}e_w\Big\rangle = \left\langle \sum_{v\in V} a_v e_v, S^* \bigg(\sum_{u\in V} b_u e_u\bigg)\right\rangle.
    \end{align*}
\end{proof}

We are now ready to prove Theorem~\ref{thm1}~\ref{thm: part spec rad}.

\begin{proof}[Proof of Theorem~\ref{thm1}~\ref{thm: part spec rad}]
    Notice that
    \begin{equation*}
        S^m(S^*)^m e_v = S^m \sum_{u\in \hijos^{\langle m\rangle}(v)} e_u = \kappa_m(v) e_v.
    \end{equation*}
    We therefore have
    \begin{equation*}
        \|S^m\|^2 = \|S^m (S^*)^m\|=\kappa_m^{(+)}.
    \end{equation*}
    The result follows by Gelfand's formula for the spectral radius.
\end{proof}
\begin{rmk}
    The same proof shows that $\|S\|=\sqrt{\kappa_1^{(+)}}$. In particular, if the bound $B$ is sharp then $\|S\|=\sqrt{B}$.
\end{rmk}

\section{Universality}\label{sec: univ}
In this section we will give the proof of Theorem~\ref{thm1}~\ref{thm: part univ}. As mentioned previously, our proof is a simple matter of applying Caradus' criterion \cite{Car}, which says that an operator is universal provided that its kernel is infinite-dimensional and its range is the whole space. 

\subsection{Infinite-dimensional eigenspaces} Fix $|\lambda|< R_-$. We first establish that the kernel of $S-\lambda  I$ is infinite-dimensional. We begin by constructing an eigenvector $x=x(v_0)=\sum a_v e_v$ that satisfies $Sx=\lambda x$, where the coefficients $a_v$ are specified by the following construction.  Fix $m$ large enough so that $|\lambda|^{2m}< \kappa_m^{(-)} \le R_-^{2m}$, choose $v_0\in V$ such that $\#\hijos(v_0)\ge2$ and choose $u_+\neq u_-\in\hijos(v_0)$. 

We set $a_{v_0}=0$ and $a_{u_{\pm}}=\pm1$. For $1\le j\le m$ and $v\in\hijos^{\langle j\rangle}(u_\pm)$ we set
\begin{equation*}
    a_v=\pm \frac{\lambda^j \kappa_{m-j}(v)}{\kappa_m(u_\pm)}.
\end{equation*}
Now suppose that we have defined $a_v$ for every $v\in \cup_{j=0}^{km}\hijos^{\langle j\rangle}(u_\pm)$, for some integer $k$. Given $u\in\hijos^{\langle km\rangle}(u_\pm)$ and $1\le j\le m$ we set
\begin{equation*}
    a_v=a_u \frac{\lambda^j \kappa_{m-j}(v)}{\kappa_m(u)}
\end{equation*}   
for $v\in\hijos^{\langle j\rangle}(u)$. This inductively defines $a_v$ for every $v$ that is a descendant of either $u_+$ or $u_-$. We set $a_v=0$ for all other $v\in V$.

Notice that, for any $v\in V$ and $j\ge 1$, we have
\begin{equation*}
    \sum_{u\in\hijos(v)} \kappa_j(u)=\kappa_{j+1}(v).
\end{equation*}
This implies that
\begin{equation}\label{eq: evect coeff}
    \sum_{u\in\hijos(v)} a_u=\lambda a_v
\end{equation}
for any $v\in V$. We will also use $\des(v)$ to denote the set of all descendants of $v$. To see that $x$ is an eigenvector, it therefore suffices to check that
\begin{equation*}
    \sum_{v\in V} |a_v|^2= \sum_{v\in \des(u_+)} |a_v|^2 + \sum_{v\in \des(u_-)} |a_v|^2<+\infty.
\end{equation*}

We have
\begin{equation*}
    \sum_{v\in \des(u_\pm)} |a_v|^2=\sum_{k=0}^\infty \sum_{j=0}^{m-1}\sum_{v\in\hijos^{\langle km+j\rangle}(u_\pm)}|a_v|^2.
\end{equation*}
By construction, the coefficients $a_u,a_v$ have the same argument when $u,v\in\hijos^{\langle t\rangle}(u_+)$ (respectively when $u,v\in\hijos^{\langle t\rangle}(u_-)$). Using now \eqref{eq: evect coeff}, this implies that 
\begin{equation*}
    \sum_{u\in \hijos (v)} |a_u|^2 \le \Big|\sum_{u\in \hijos (v)} a_u\Big|^2 = |\lambda|^2|a_v|^2
\end{equation*}
for $v\in \des(u_\pm)$. Iterating this estimate we get
\begin{equation*}
    \sum_{v\in \des(u_\pm)} |a_v|^2\le \left(\sum_{j=0}^{m-1}|\lambda|^{2j}\right) \sum_{k=0}^\infty \sum_{v\in\hijos^{\langle km\rangle}(u_\pm)}|a_v|^2.
\end{equation*}

Now if $v\in \hijos^{\langle km\rangle}(u_\pm)$ then there exists $u\in \hijos^{\langle (k-1)m\rangle}(u_\pm)$ such that $v\in \hijos^{\langle m\rangle}(u)$. By construction we have
\begin{equation*}
    a_v=a_u \frac{\lambda^m}{\kappa_m(u)}
\end{equation*}
and so
\begin{align*}
    \sum_{v\in\hijos^{\langle km\rangle}(u_\pm)}|a_v|^2 
    &=\sum_{u\in\hijos^{\langle (k-1) m\rangle} (u_\pm)}\sum_{v\in\hijos^{\langle m\rangle}(u)}|a_v|^2\\
    &=\sum_{u\in\hijos^{\langle (k-1) m\rangle} (u_\pm)} \frac{|\lambda|^{2m}}{\kappa_m(u)} |a_u|^2\\ 
    &\le\frac{|\lambda|^{2m}}{\kappa_m^{(-)}}\sum_{u\in\hijos^{\langle (k-1) m\rangle} (u_\pm)}|a_u|^2
\end{align*}
This yields
\begin{equation*}
    \sum_{v\in\hijos^{\langle km\rangle}(u_\pm)}|a_v|^2 \le \left(\frac{|\lambda|^{2m}}{\kappa_m^{(-)}}\right)^k |a_{u_\pm}|^2 = \left(\frac{|\lambda|^{2m}}{\kappa_m^{(-)}}\right)^k
\end{equation*}
and so
\begin{equation*}
    \sum_{v\in \des(u_\pm)} |a_v|^2\le \left(\sum_{j=0}^{m-1}|\lambda|^{2j}\right) \sum_{k=0}^\infty \left(\frac{|\lambda|^{2m}}{\kappa_m^{(-)}}\right)^k < +\infty
\end{equation*}
since $|\lambda|^{2m}<\kappa_m^{(-)}$. We have shown that $x(v_0)$ is an eigenvector.

Finally note that if $v_0$ and $v_1$ are on different branches of the tree, then $x(v_0)$ and $x(v_1)$ are orthogonal. But Assumption~\ref{Ass: lots of branch} 
ensures that there are infinitely many disjoint branches that contain candidates for $v_0$. It follows that the kernel of $S-\lambda  I$ is infinite-dimensional.

\subsection{Surjectivity} We now turn to proving the surjectivity of $S-\lambda  I$. Suppose that $x=\sum a_v e_v$. We proceed to construct vectors $y_j$ such that $Sy_1=x$, $Sy_j=\lambda y_{j-1}$ for $j\ge 2$ and $\sum_j \|y_j\|<+\infty$. Putting $y=\sum_j y_j$ yields $Sy-\lambda y=x$, as desired.

Fix $v\in V$ and $1\le j\le m$ and set
\begin{equation*}
    b_u(v) = a_v \frac{\lambda^{j-1} \kappa_{m-j}(u)}{\kappa_m(v)}.
\end{equation*}
for $u\in\hijos^{\langle j\rangle}(v)$. Now, for a given $k\ge 1$, $1\le j\le m$, $u\in\hijos^{\langle km\rangle}(v)$ and we inductively define
\begin{equation*}
    b_{u'}(v) = b_u(v) \frac{\lambda^j \kappa_{m-j}(u')} {\kappa_m(u)}
\end{equation*}   
for $u'\in\hijos^{\langle j\rangle}(u)$. Finally we put
\begin{equation*}
    y_j(v) = \sum_{u\in\hijos^{\langle j\rangle}(v)} b_u(v) e_u\qquad\text{and}\qquad y_j = \sum_{v\in V} y_j(v),
\end{equation*}
assuming that the sums defining $y_j$ converge.

Notice that
\begin{equation}\label{eq: b coeff ident1}
    \sum_{u\in\hijos(v)} b_u(v)=a_v
\end{equation}
and that
\begin{equation}\label{eq: b coeff ident2}
    \sum_{u'\in\hijos(u)} b_{u'}(v)=\lambda b_u(v)
\end{equation}
for $u\in \hijos^{\langle j\rangle}(v)$ where $j\ge1$. This implies that we have $Sy_1=x$ and $Sy_j=\lambda y_{j-1}$, provided that they are well-defined.

To finish we will show that
\begin{equation*}
    \|y_j\| \le |\lambda|^{j-1} \|x\|
\end{equation*}
for $1\le j\le m$ and 
\begin{equation*}
    \|y_{km+j}\| \le |\lambda|^{j +m-1} \left( \frac{|\lambda|^{m}} {\sqrt{ \kappa_m^{(-)}} }\right)^{k-1} \|x\|
\end{equation*}
for $k\ge 1$ and $1\le j\le m$. Notice that if $v\neq v'$ then $y_j(v)$ and $y_j(v')$ are orthogonal. It follows that
\begin{equation*}
    \|y_j\|^2 = \sum_{v\in V} \sum_{u\in\hijos^{\langle j\rangle}(v)} |b_u(v)|^2.
\end{equation*}
Since the arguments of the coefficients are constant, applying \eqref{eq: b coeff ident1} we get
\begin{equation*}
    \|y_1\|^2 \le \sum_{v\in V} \left|\sum_{u\in\hijos(v)} b_u(v)\right|^2=\sum_{v\in V}|a_v|^2=\|x\|^2.
\end{equation*}
For $j\ge 2$, we apply \eqref{eq: b coeff ident2} instead, yielding
\begin{align*}
    \|y_j\|^2 &=\sum_{v\in V} \sum_{u\in\hijos^{\langle j-1\rangle}(v)} \sum_{u'\in\hijos(u)} |b_{u'}(v)|^2\\
    &\le \sum_{v\in V} \sum_{u\in\hijos^{\langle j-1\rangle}(v)}\left|\sum_{u'\in\hijos(u)} b_{u'}(v) \right|^2\\
    &=|\lambda|^2\sum_{v\in V} \sum_{u\in\hijos^{\langle j-1\rangle}(v)}|b_{u}(v)|^2=|\lambda|^2\|y_{j-1}\|^2.
\end{align*}
It therefore suffices to see that
\begin{equation*}
    \|y_{km}\|^2 \le  \frac{|\lambda|^{2m}} {\kappa_m^{(-)} } \|y_{(k-1)m}\|^2
\end{equation*}
for $k\ge 2$. We have
\begin{align*}
    \|y_{km}\|^2 &=\sum_{v\in V} \sum_{u\in\hijos^{\langle (k-1)m\rangle}(v)} \sum_{u'\in\hijos^{\langle m\rangle}(u)} |b_{u'}(v)|^2\\
    &= \sum_{v\in V} \sum_{u\in\hijos^{\langle (k-1)m\rangle}(v)} \sum_{u'\in\hijos^{\langle m\rangle}(u)} \left|b_{u}(v)\frac{\lambda^m}{\kappa_m(u)}\right|^2\\
    &=|\lambda|^{2m}\sum_{v\in V} \sum_{u\in\hijos^{\langle (k-1)m\rangle}(v)}\frac{|b_{u}(v)|^2}{\kappa_m(u)}\\
    &\le \frac{|\lambda|^{2m}} {\kappa_m^{(-)}} \sum_{v\in V} \sum_{u\in\hijos^{\langle (k-1)m\rangle}(v)} |b_{u}(v)|^2= \frac{|\lambda|^{2m}} {\kappa_m^{(-)}} \|y_{(k-1)m}\|^2.
\end{align*}
This completes the proof.

\subsection*{Declaration on AI usage}
AI was used only to improve the presentation, but all of the mathematical arguments presented here are our own.

\end{document}